\newtheorem*{theorem*}{Theorem}
\newtheorem{theorem}{Theorem}[section]
\newtheorem{lemma}[theorem]{Lemma}
\def\barr{\begin{array}}
\def\earr{\end{array}}
\title{A generalization of a result on the sum of element orders of a finite group}
\author{Marius T\u arn\u auceanu}
\date{January 21, 2020}
\begin{document}

\maketitle

\begin{abstract}
Let $G$ be a finite group and let $\psi(G)$ denote the sum of element orders of $G$. It is well-known that the
maximum value of $\varphi$ on the set of groups of order $n$, where $n$ is a positive integer, will occur
at the cyclic group $C_n$. For nilpotent groups, we prove a natural generalization of this result,
obtained by replacing the element orders of $G$ with the element orders relative to a certain subgroup $H$ of $G$.
\end{abstract}
\vspace{1mm}
{\small
\noindent
{\bf MSC2000\,:} Primary 20D60; Secondary 20D15, 20F18.

\noindent
{\bf Key words\,:} relative element orders, $p$-groups, nilpotent groups.}

\section{Introduction}
Let $G$ be a finite group. In 2009, H. Amiri, S.M. Jafarian Amiri and I.M. Isaacs introduced in their paper \cite{1} the function
\begin{equation}
\psi(G)=\sum_{x\in G}o(x),\nonumber
\end{equation}where $o(x)$ denotes the order of $x$ in $G$. They proved the following basic theorem:

\begin{theorem*}
If $G$ is a group of order $n$, then $\psi(G)\leq\psi(C_n)$, and we have equality if and only if $G$ is cyclic.
\end{theorem*}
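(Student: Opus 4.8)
The plan is to prove $\psi(G)\le\psi(C_n)$ in three stages --- prime-power groups, nilpotent groups, and then arbitrary finite groups by induction on $|G|$ --- recovering the equality case along the way. Two ingredients recur throughout. First, $\psi$ is multiplicative on direct products of coprime order: if $\gcd(|A|,|B|)=1$ then $o\big((a,b)\big)=o(a)\,o(b)$, so $\psi(A\times B)=\psi(A)\psi(B)$; in particular $\psi(C_n)=\prod_{p\mid n}\psi(C_{p^{a_p}})$, where $p^{a_p}$ is the $p$-part of $n$. Second, Frobenius' theorem: for $d$ dividing $|G|$ the number of solutions in $G$ of $x^{d}=1$ is a multiple of $d$, and for $G=C_n$ this count is exactly $d$ --- so the cyclic group has, for each exponent $d\mid n$, the fewest elements killed by $d$.

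First I would treat a $p$-group $P$ of order $p^{a}$. Put $s_i=|\{x\in P:x^{p^{i}}=1\}|$, so that $P$ has $s_i-s_{i-1}$ elements of order exactly $p^{i}$ (with $s_{-1}:=0$, $s_0=1$, $s_a=p^{a}$). Abel summation of $\psi(P)=\sum_{i=0}^{a}p^{i}(s_i-s_{i-1})$ gives
\[
\psi(P)=p^{2a}+(1-p)\Big(1+\sum_{i=1}^{a-1}p^{i}s_i\Big).
\]
Since $1-p<0$ and $s_i\ge p^{i}$ by Frobenius, this yields $\psi(P)\le\psi(C_{p^{a}})$, with equality precisely when $s_i=p^{i}$ for all $1\le i\le a-1$; in particular $s_1=p$, so $P$ has a unique subgroup of order $p$. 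A $p$-group with that property is cyclic or (only if $p=2$ and $a\ge 3$) generalized quaternion, and $Q_{2^{a}}$ is ruled out because it has $p^{2}+p^{a-1}$ elements $x$ with $x^{4}=1$ rather than $p^{2}$; hence equality forces $P$ cyclic. The nilpotent case follows immediately: writing a nilpotent $G$ as the product $P_1\times\cdots\times P_r$ of its Sylow subgroups, $\psi(G)=\prod_i\psi(P_i)\le\prod_i\psi(C_{|P_i|})=\psi(C_n)$, with equality iff every $P_i$ is cyclic, i.e.\ iff $G$ is cyclic.

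For a general finite group I would induct on $n$. Given $G$ noncyclic, choose a minimal normal subgroup $N\trianglelefteq G$; then
\[
\psi(G)=\psi(N)+\sum_{\bar x\in G/N,\ \bar x\ne\bar 1}\ \sum_{x\in\bar x}o(x),
\]
where for a coset $\bar x$ of order $k$ in $G/N$ every $x\in\bar x$ satisfies $o(x)=k\cdot o(x^{k})$ with $x^{k}\in N$, so the task reduces to bounding each coset sum $k\sum_{x\in\bar x}o(x^{k})$ using $\psi(N)$ and the inductive estimates for $N$ and for $G/N$. A convenient equivalent target is the ``stochastic dominance'' statement
\[
|\{x\in G:o(x)\ge t\}|\ \le\ |\{x\in C_n:o(x)\ge t\}|\qquad\text{for all }t\ge 1,
\]
since $\psi(G)=\sum_{t\ge1}|\{x\in G:o(x)\ge t\}|$, so summing the dominance recovers $\psi(G)\le\psi(C_n)$, while its value at $t=n$ already forces an element of order $n$ in the equality case. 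For $t$ crossing a single prime-power value this dominance is precisely Frobenius; the trouble is the composite range, where $\{x:o(x)<t\}=\bigcup_{d\mid n,\ d<t}\{x:x^{d}=1\}$ and an inclusion--exclusion mixes the signs of the Frobenius estimates.

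The main obstacle, as this makes clear, is the non-nilpotent inductive step. Unlike the coprime situation, $\psi$ is neither sub- nor super-multiplicative across a general extension $1\to N\to G\to G/N\to 1$ --- compare $\psi(C_4)=11>9=\psi(C_2)^{2}$ with $\psi(C_2\times C_2)=7<9$ --- so no formal multiplicativity bound can drive the induction, and one must instead control, coset by coset, the discrepancy between $o(x)$ and $o(\bar x)\cdot o(x^{o(\bar x)})$. I expect the remaining effort to consist of reducing a hypothetical minimal counterexample to a tractable shape --- for instance one with a normal Sylow subgroup and cyclic complement, a nonsolvable composition factor being far too costly for $\psi$ --- and then settling that shape by a direct analysis of the relevant power maps. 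Everything else --- the prime-power estimate, the multiplicativity, and the equality discussion --- is routine once Frobenius' theorem and the Abel summation identity above are in place.
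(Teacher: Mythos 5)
There is a genuine gap: you prove the theorem only for nilpotent groups. Your $p$-group argument is correct and complete --- the Abel-summation identity $\psi(P)=p^{2a}+(1-p)\bigl(1+\sum_{i=1}^{a-1}p^{i}s_i\bigr)$ checks out, Frobenius gives $s_i\ge p^i$, and the equality analysis (unique subgroup of order $p$, elimination of generalized quaternion groups via $s_2=4+2^{a-1}>4$) is sound; multiplicativity then settles the nilpotent case with its equality statement. But for a general finite group your text stops at a description of the difficulty. The coset decomposition and the identity $o(x)=o(\bar x)\cdot o\bigl(x^{o(\bar x)}\bigr)$ are correct as far as they go, yet you never actually bound the coset sums; the ``stochastic dominance'' reformulation is a \emph{stronger} statement than the one to be proved (sufficient, not equivalent), and you explicitly note that the inclusion--exclusion over composite thresholds defeats a naive Frobenius argument. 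The closing paragraph (``I expect the remaining effort to consist of reducing a hypothetical minimal counterexample to a tractable shape\dots'') is a research plan, not a proof, and it is precisely where the entire content of the Amiri--Jafarian Amiri--Isaacs theorem lives: their argument requires a genuinely different inductive mechanism (comparing $\psi(G)$ with $\psi$ of a proper subgroup or quotient via a carefully chosen coset of a normal subgroup and a Sylow subgroup for the largest prime), not a multiplicativity bound.

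For context: the paper you are reading does not prove this statement either --- it quotes it from reference [1] --- so there is no in-paper proof to compare against. What the paper does prove (Lemma 2.2) is the relative version $\psi_H(G)\le\psi_{H_{p^m}}(C_{p^n})$ for $p$-groups, by induction along a subnormal chain using the recursion $\psi_H(G)\le p\,\psi_K(G)-p^m(p-1)$; your Frobenius-counting route is a genuinely different and arguably more transparent way to get the $H=1$ case of that bound, and it has the advantage of yielding the equality characterization directly. But as a proof of the stated theorem for all finite groups, your submission is incomplete: the non-nilpotent case must either be carried out in full or explicitly cited.
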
Since then many authors have studied the properties of the function $\psi(G)$ and its relations with the structure of $G$. We recall only that $\psi$ is multiplicative and $\psi(C_{p^n})=\frac{p^{2n+1}+1}{p+1}$ when $p$ is a prime (see e.g. Lemmas 2.2(3) and 2.9(1) of \cite{2}).\newpage

Given a subgroup $H$ of $G$, in what follows we will consider the function
\begin{equation}
\psi_H(G)=\sum_{x\in G}o_H(x),\nonumber
\end{equation}where $o_H(x)$ denotes the order of $x$ relative to $H$, i.e. the smallest positive integer $m$ such that $x^m\in H$. Clearly, for $H=1$ we have $\psi_H(G)=\psi(G)$.

By replacing $\psi(G)$ with $\psi_H(G)$, we are able to generalize the above theorem for nilpotent groups.

\begin{theorem}
Let $G$ be a nilpotent group of order $n$ and $H$ be a subgroup of order $m$ of $G$. Then
\begin{equation}
\psi_H(G)\leq\psi_{H_m}(C_n),
\end{equation}where $H_m$ is the unique subgroup of order $m$ of $C_n$.
\end{theorem}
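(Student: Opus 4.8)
The plan is to reduce everything to $p$-groups and then to analyse $\psi_H$ through the identity $o_H(x)=o(x)/|\langle x\rangle\cap H|$. First I would carry out the reduction. Since $G$ is nilpotent, $G=P_1\times\cdots\times P_k$ with $P_i$ the Sylow $p_i$-subgroup of $G$; and since $H$ is itself nilpotent, $H=(H\cap P_1)\times\cdots\times(H\cap P_k)$, where $H\cap P_i$ is the Sylow $p_i$-subgroup of $H$. For $x=(x_1,\dots,x_k)$ one has $x^t\in H$ iff $x_i^{\,t}\in H\cap P_i$ for every $i$, so $o_H(x)=\operatorname{lcm}_i o_{H\cap P_i}(x_i)=\prod_i o_{H\cap P_i}(x_i)$, the numbers $o_{H\cap P_i}(x_i)$ being powers of distinct primes. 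Hence $\psi_H$ is multiplicative, $\psi_H(G)=\prod_i\psi_{H\cap P_i}(P_i)$, and the same computation applied to $C_n=\prod_i C_{p_i^{a_i}}$ gives $\psi_{H_m}(C_n)=\prod_i\psi_{H_{m,i}}(C_{p_i^{a_i}})$, where $n=\prod_i p_i^{a_i}$, $m=\prod_i p_i^{b_i}$, $|H\cap P_i|=p_i^{b_i}$, and $H_{m,i}\le C_{p_i^{a_i}}$ is the subgroup of order $p_i^{b_i}$. So it suffices to prove the theorem when $G=P$ is a $p$-group of order $p^a$ and $|H|=p^b$, with $H'\le C_{p^a}$ the subgroup of order $p^b$.

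For the $p$-group case, $o_H(x)=[\langle x\rangle:\langle x\rangle\cap H]$ is a power of $p$ dividing $p^{a-b}$. Using $k=\sum_{d\mid k}\varphi(d)$ and the fact that, for a $p$-power $o_H(x)$, the condition $p^i\mid o_H(x)$ is equivalent to $x^{p^{i-1}}\notin H$, I obtain
\begin{equation}
\psi_H(P)=p^a+\sum_{i\ge1}(p^i-p^{i-1})\bigl(p^a-|A_{i-1}|\bigr),\qquad A_j:=\{\,x\in P:x^{p^j}\in H\,\}.\nonumber
\end{equation}
In $C_{p^a}$ with $H'$ one checks directly that $|A_j|=p^{\min(j+b,\,a)}$ for all $j$ (indeed there $A_j=\{x:x^{p^{j+b}}=1\}$). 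Since the coefficients $p^i-p^{i-1}$ are positive, the inequality $\psi_H(P)\le\psi_{H'}(C_{p^a})$ follows once we establish the key estimate
\begin{equation}
|A_j|=\bigl|\{\,x\in P:x^{p^j}\in H\,\}\bigr|\ \ge\ p^{\min(j+b,\,a)}\qquad\text{for all }j\ge0.\nonumber
\end{equation}

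The key estimate is the main obstacle, and I would prove it by induction on $|P|$. The case $j=0$ is immediate since $A_0=H$. For $j\ge a-b$ I claim $A_j=P$: writing $o(x)=p^\ell$, the inclusion $\langle x\rangle H\subseteq P$ gives $p^{\ell+b}/|\langle x\rangle\cap H|=|\langle x\rangle H|\le p^a$, so $|\langle x\rangle\cap H|\ge p^{\ell-(a-b)}$; since $\langle x\rangle$ is cyclic, $\langle x\rangle\cap H$ is then the subgroup of $\langle x\rangle$ of that order, which contains $x^{p^{a-b}}$, whence $x^{p^{a-b}}\in H$ and a fortiori $x^{p^j}\in H$ (the subcase $\ell\le a-b$ being trivial, as then $x^{p^{a-b}}=1$). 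Finally, for $1\le j\le a-b-1$ we have $b\le a-2$, so $H$ is a proper subgroup; choosing a maximal subgroup $M$ of $P$ with $H\le M$ (necessarily of index $p$), the inductive hypothesis applied to $H\le M$ yields $|\{\,x\in M:x^{p^j}\in H\,\}|\ge p^{\min(j+b,\,a-1)}=p^{j+b}$, hence $|A_j|\ge p^{j+b}$. This closes the induction and proves the theorem.

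Everything apart from the key estimate — the splitting of $H$, the multiplicativity of $\psi_H$, the bookkeeping identity for $\psi_H(P)$, and the closed form of $|A_j|$ in the cyclic case — is routine. The decisive observation inside the key estimate is that the top exponent $j=a-b$ can be settled at once from the intersection $\langle x\rangle\cap H$, which then lets every smaller exponent be absorbed into a maximal subgroup and so avoids any delicate analysis of how $H$ is embedded in $P$.
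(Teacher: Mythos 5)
Your proof is correct, and the $p$-group step is genuinely different from the paper's. Both arguments make the same reduction to $p$-groups via multiplicativity of $\psi_H$ over the Sylow decomposition. For a $p$-group $P$ of order $p^a$ with $|H|=p^b$, the paper inducts on the index $[P:H]$: it inserts a subgroup $K$ with $H\trianglelefteq K$ and $[K:H]=p$, proves $\psi_H(P)\le p\,\psi_K(P)-p^b(p-1)$ from $o_H(x)\mid p\,o_K(x)$, and checks that this recursion is exactly saturated by the cyclic group using the closed form $\psi(C_{p^t})=(p^{2t+1}+1)/(p+1)$. You instead rewrite $\psi_H(P)$ as a positive combination of the complements of the layers $A_j=\{x\in P:x^{p^j}\in H\}$ and reduce everything to the single estimate $|A_j|\ge p^{\min(j+b,a)}$, which holds with equality in the cyclic case. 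That estimate is correct as you prove it (the case $j\ge a-b$ from $|\langle x\rangle H|\le |P|$ together with the chain structure of subgroups of $\langle x\rangle$, the remaining cases by descending into a maximal subgroup containing $H$); in fact it follows even more directly by taking any subgroup $Q$ with $H\le Q\le P$ and $|Q|=p^{\min(j+b,a)}$ and running your $|\langle x\rangle H|\le|Q|$ argument inside $Q$, which shows $Q\subseteq A_j$ outright. Your route buys a sharper picture of where the inequality comes from --- the deficit $\psi_{H'}(C_{p^a})-\psi_H(P)$ is exhibited explicitly as $\sum_{i\ge1}(p^i-p^{i-1})\bigl(|A_{i-1}|-p^{\min(i-1+b,\,a)}\bigr)$ --- and it avoids any closed-form evaluation of $\psi(C_{p^t})$; the paper's induction is shorter but yields no structural information beyond the inequality itself.
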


Note that the inequality (1) can easily be proved for normal subgroups $H$. Indeed, in this case we have
\begin{equation}
o_H(x)=o(xH) \mbox{ in } G/H,\, \forall\, x\in G\nonumber
\end{equation}and therefore
\begin{equation}
\psi_H(G)=|H|\psi(G/H)\leq m\,\psi(C_{\frac{n}{m}})=\psi_{H_m}(C_n).\nonumber
\end{equation}This also shows that the equality occurs in (1) whenever $H$ is normal and $G/H$ is cyclic.

Finally, we conjecture that Theorem 1.1 is also true for non-nilpotent groups $G$, i.e. it is true for all finite groups $G$.

Most of our notation is standard and will usually not be repeated here. Elementary notions and results on groups can be found in \cite{4}.

\section{Proof of the main result}

Our first lemma collects two basic properties of the function $\psi_H(G)$.

\begin{lemma}
\begin{itemize}
\item[{\rm a)}] If $(G_i)_{i=\overline{1,k}}$ is a family of finite groups having coprime orders and $H_i\leq G_i$, $i=1,...,k$, then
\begin{equation}
\psi_{H_1\times\cdots\times H_k}(G_1\times\cdots\times G_k)=\prod_{i=1}^k\psi_{H_i}(G_i).\nonumber
\end{equation}In particular, if $G$ is a finite nilpotent group, $(G_i)_{i=\overline{1,k}}$ are the Sylow $p_i$-subgroups of $G$ and $H=H_1\times\cdots\times H_k\leq G$, then\newpage
\begin{equation}
\psi_H(G)=\prod_{i=1}^k\psi_{H_i}(G_i).\nonumber
\end{equation}
\item[{\rm b)}] If $G$ is a finite group and $H\trianglelefteq K\leq G$, then
\begin{equation}
\psi_H(G)\leq[K:H]\,\psi_K(G)-|K|+|H|.
\end{equation}In particular, if $G$ is a finite $p$-group and $H\leq K\leq G$ with $|H|=p^m$ and $|K|=p^{m+1}$, then
\begin{equation}
\psi_H(G)\leq p\,\psi_K(G)-p^m(p-1).
\end{equation}
\end{itemize}
\end{lemma}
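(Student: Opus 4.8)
The plan is to prove the two parts in turn: part a) by reducing the relative order on a coprime direct product to a product of relative orders, and part b) by a pointwise comparison of $o_H$ with $o_K$ that is then sharpened by isolating the contribution of $H$.

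For a), I would start from the observation that for $x=(x_1,\dots,x_k)$ in $G_1\times\cdots\times G_k$ and $H=H_1\times\cdots\times H_k$ one has $x^m\in H$ if and only if $x_i^m\in H_i$ for all $i$, i.e. if and only if $\operatorname{lcm}_i o_{H_i}(x_i)\mid m$; hence $o_H(x)=\operatorname{lcm}\bigl(o_{H_1}(x_1),\dots,o_{H_k}(x_k)\bigr)$. Since $o_{H_i}(x_i)$ divides $o(x_i)$ and hence $|G_i|$, and the $|G_i|$ are pairwise coprime, this lcm equals the product $\prod_i o_{H_i}(x_i)$. Summing over all $x$ and factoring the sum over the Cartesian product then gives $\psi_{H_1\times\cdots\times H_k}(G_1\times\cdots\times G_k)=\prod_i\psi_{H_i}(G_i)$. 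The ``in particular'' statement follows immediately by applying this to the decomposition of the nilpotent group $G$ into the direct product of its Sylow subgroups $G_i$, with $H_i=H\cap G_i$.

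For b), the first step is to show $o_H(x)\le[K:H]\,o_K(x)$ for every $x\in G$. Set $d=o_K(x)$. Because $H\subseteq K$, $x^{o_H(x)}\in H\subseteq K$ forces $d\mid o_H(x)$; writing $o_H(x)=dj$ and noting $x^{dj}\in H\iff(x^d)^j\in H$, the least such $j$ is $o_H(x^d)$, so $o_H(x)=d\cdot o_H(x^d)$. As $x^d\in K$ and $H\trianglelefteq K$, the number $o_H(x^d)$ is the order of $x^dH$ in $K/H$ and hence is at most $[K:H]$, which gives the claimed pointwise bound. Summing this over all $x$ already yields $\psi_H(G)\le[K:H]\psi_K(G)$, and the sharper inequality is obtained by separating the $|H|$ elements of $H$, for which $o_H(x)=o_K(x)=1$: one gets $\psi_H(G)\le|H|+[K:H]\bigl(\psi_K(G)-|H|\bigr)=[K:H]\psi_K(G)-|K|+|H|$. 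The displayed $p$-group consequence is then just the case $[K:H]=p$, $|H|=p^m$, $|K|=p^{m+1}$, since $|K|-|H|=p^m(p-1)$.

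I expect the only non-routine point to be the identity $o_H(x)=o_K(x)\cdot o_H(x^{o_K(x)})$ together with the estimate $o_H(x^{o_K(x)})\le[K:H]$; the rest is bookkeeping with sums. (Incidentally, normality of $H$ in $K$ is not strictly needed for the estimate — one has $o_H(y)=[\langle y\rangle:\langle y\rangle\cap H]\le[K:H]$ because $|\langle y\rangle H|\le|K|$ — but assuming it keeps the statement clean.)
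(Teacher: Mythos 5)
Your proposal is correct and follows essentially the same route as the paper: in a) the relative order on a coprime direct product factors as a product, and in b) the key pointwise bound $o_H(x)\le[K:H]\,o_K(x)$ (which the paper gets by raising $x^{o_K(x)}H$ to the power $[K:H]$ in $K/H$, and you get via $o_H(x)=o_K(x)\cdot o_H(x^{o_K(x)})$ with the second factor bounded by the order of a coset in $K/H$ --- the same Lagrange argument) is followed by the identical bookkeeping that splits off the $|H|$ terms with $o_H(x)=o_K(x)=1$. Your parenthetical observation that normality of $H$ in $K$ can be dispensed with is a nice bonus not present in the paper.
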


\begin{proof}
\begin{itemize}
\item[{\rm a)}] Since $G_i$, $i=1,...,k$, are of coprime orders, for every $x=(x_1,...,x_k)\in G_1\times\cdots\times G_k$ we have
$$o_{H_1\times\cdots\times H_k}(x)=\prod_{i=1}^k o_{H_i}(x_i).$$Then
$$\psi_{H_1\times\cdots\times H_k}(G_1\times\cdots\times G_k)=\!\!\!\!\!\sum_{x=(x_1,...,x_k)\in G_1\times\cdots\times G_k}\!\!\!\!\!o_{H_1\times\cdots\times H_k}(x)$$ $$=\sum_{x_1\in G_1}\cdots\sum_{x_k\in G_k}o_{H_1}(x_1)\cdots o_{H_k}(x_k)$$
$$\hspace{3mm}=\prod_{i=1}^k\left(\,\sum_{x_i\in G_i} o_{H_i}(x_i)\right)=\prod_{i=1}^k\psi_{H_i}(G_i),$$as desired.
\item[{\rm b)}] Let $x\in G$. Then $x^{o_K(x)}\in K$ and so $x^{o_K(x)}H\in K/H$, implying that $\left(x^{o_K(x)}H\right)^{[K:H]}=H$. Thus $x^{[K:H]\,o_K(x)}\in H$, which leads to $$o_H(x)\mid\, [K:H]\,o_K(x)$$and consequently
    $$o_H(x)\leq[K:H]\,o_K(x).$$This shows that $$\psi_H(G)=\sum_{x\in G} o_H(x)=\sum_{x\in G\setminus H}\!\! o_H(x)+\sum_{x\in H} o_H(x)$$ $$\hspace{-8mm}\leq[K:H]\!\!\sum_{x\in G\setminus H}\!\! o_K(x)+|H|$$ $$\hspace{18,5mm}=[K:H]\left(\,\sum_{x\in G} o_K(x)-\sum_{x\in H} o_K(x)\!\right)\!\!+|H|$$ $$=[K:H]\left(\psi_K(G)-|H|\right)+|H|$$ $$\hspace{-2mm}=[K:H]\,\psi_K(G)-|K|+|H|,$$completing the proof.
\end{itemize}
\end{proof}

\noindent{\bf Remark.} By taking $H=1$ and $K\trianglelefteq G$ in (2), one obtains
\begin{equation}
\psi(G)\leq|K|\,\psi_K(G)-|K|+1=|K|^2\,\psi(G/K)-|K|+1.
\end{equation}This improves the inequality in Proposition 2.6 of \cite{3}. Also, by taking $K=G$ in (4), we get a new upper bound for $\psi(G)$:
\begin{equation}
\psi(G)\leq|G|^2-|G|+1.
\end{equation}Note that we have equality in (5) if and only if $G$ is cyclic of prime order.
\bigskip

Next we prove the inequality (1) for $p$-groups.

\begin{lemma}
Let $G$ be a $p$-group of order $p^n$ and $H$ be a subgroup of order $p^m$ of $G$. Then
\begin{equation}
\psi_H(G)\leq\psi_{H_{p^m}}(C_{p^n}).\nonumber
\end{equation}
\end{lemma}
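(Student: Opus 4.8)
The plan is to argue by induction on $n-m$. The base case $n-m=0$ is immediate: then $H=G$, so $o_H(x)=1$ for every $x\in G$ and hence $\psi_H(G)=p^n$, while $H_{p^n}=C_{p^n}$ gives $\psi_{H_{p^n}}(C_{p^n})=p^n$ as well, so the asserted inequality holds (with equality).

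For the inductive step I would assume $n-m\geq 1$, so that $H$ is a proper subgroup of the $p$-group $G$. Then $H$ is properly contained in its normalizer $N_G(H)$, and since $N_G(H)/H$ is a non-trivial $p$-group it contains a subgroup of order $p$; taking the preimage of such a subgroup in $N_G(H)$ yields a subgroup $K$ with $H\trianglelefteq K\leq G$ and $[K:H]=p$, so that $|K|=p^{m+1}$. Now inequality~(3) applies and gives $\psi_H(G)\leq p\,\psi_K(G)-p^m(p-1)$, and since $K$ has order $p^{m+1}$ with $n-(m+1)<n-m$, the induction hypothesis applied to the subgroup $K$ of $G$ gives $\psi_K(G)\leq\psi_{H_{p^{m+1}}}(C_{p^n})$. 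Combining these two estimates,
\[
\psi_H(G)\leq p\,\psi_{H_{p^{m+1}}}(C_{p^n})-p^m(p-1),
\]
so it will suffice to show that the right-hand side is at most $\psi_{H_{p^m}}(C_{p^n})$; in fact it will turn out to be exactly equal to it, which is precisely what makes the induction work.

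To establish this last point I would use that for each $k$ the subgroup $H_{p^k}$ is normal in $C_{p^n}$ with quotient $C_{p^{n-k}}$, so that $o_{H_{p^k}}(x)=o(xH_{p^k})$ for every $x$ and therefore $\psi_{H_{p^k}}(C_{p^n})=p^k\,\psi(C_{p^{n-k}})$. Substituting the known value $\psi(C_{p^d})=\frac{p^{2d+1}+1}{p+1}$ and simplifying, one obtains
\[
p\,\psi_{H_{p^{m+1}}}(C_{p^n})-p^m(p-1)=\frac{p^{2n-m+1}+p^m}{p+1}=\psi_{H_{p^m}}(C_{p^n}),
\]
which completes the induction. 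There is no serious obstacle here: inequality~(3) is designed to become an equality on the cyclic group $C_{p^n}$, so the induction is essentially forced once one has it, and the only facts needed are the elementary identity just displayed and the standard observation that a proper subgroup of a $p$-group is properly contained in its normalizer (used to produce $K$).
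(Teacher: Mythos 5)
Your proof is correct and follows essentially the same route as the paper: induction on the index $[G:H]$, producing an intermediate subgroup $K$ of order $p^{m+1}$ (the paper cites subnormality of subgroups of $p$-groups where you use the normalizer condition — the same standard fact), then applying inequality (3) together with the induction hypothesis and the identity $p\,\psi_{H_{p^{m+1}}}(C_{p^n})-p^m(p-1)=\psi_{H_{p^m}}(C_{p^n})$. Your arithmetic verification of that identity matches the paper's computation.
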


\begin{proof}
We will proceed by induction on $[G:H]$. Obviously, the inequality holds for $[G:H]=1$. Assume now that it holds for all subgroups of $G$ of index $<[G:H]$. Since every subgroup of $G$ is subnormal, we can choose $K\leq G$ such that $H\subset K$ and $|K|=p^{m+1}$. Then, by (3) and the inductive hypothesis, we get
$$\hspace{-15mm}\psi_H(G)\leq p\,\psi_K(G)-p^m(p-1)\leq p\,\psi_{H_{p^{m+1}}}(C_{p^n})-p^m(p-1)$$
$$\hspace{20,3mm}=p^{m+2}\psi(C_{p^{n-m-1}})-p^m(p-1)=p^m\left[p^2\,\frac{p^{2n-2m-1}+1}{p+1}-(p-1)\right]$$
$$\hspace{-9,5mm}=p^m\,\frac{p^{2n-2m+1}+1}{p+1}=p^m\psi(C_{p^{n-m}})=\psi_{H_{p^m}}(C_{p^n}),$$as desired.
\end{proof}

We are now able to prove our main result.

\bigskip\noindent{\bf Proof of Theorem 1.1.} Let $n=p_1^{n_1}\cdots p_k^{n_k}$ be the decomposition of $n$ as a product of prime factors. Since $G$ is nilpotent, we have $G\cong G_1\times\cdots\times G_k$, where $(G_i)_{i=\overline{1,k}}$ are the Sylow $p_i$-subgroups of $G$. Moreover, any subgroup $H$ of $G$ is of type $H\cong H_1\times\cdots\times H_k$ with $H_i\leq G_i$, $|H_i|=p_i^{m_i}$, $\forall\, i=1,...,k$. Then, Lemmas 2.1 (a)) and 2.2 lead to
\begin{equation}
\psi_H(G)=\prod_{i=1}^k\psi_{H_i}(G_i)\leq\prod_{i=1}^k\psi_{H_{p_i^{m_i}}}(C_{p_i^{n_i}})=\psi_{H_m}(C_n).\nonumber
\end{equation}This completes the proof.$\qed$

\vspace*{3ex}\small

\hfill
\begin{minipage}[t]{5cm}
Marius T\u arn\u auceanu \\
Faculty of  Mathematics \\
``Al.I. Cuza'' University \\
Ia\c si, Romania \\
e-mail: {\tt tarnauc@uaic.ro}
\end{minipage}


\begin{thebibliography}{10}
\bibitem{1} H. Amiri, S.M. Jafarian Amiri, I.M. Isaacs, {\it Sums of element orders in finite groups}, Comm. Algebra {\bf 37} (2009), 2978-2980.
\bibitem{2} M. Herzog, P. Longobardi, M. Maj, {\it An exact upper bound for sums of element orders in non-cyclic finite groups}, J. Pure Appl. Algebra {\bf 222} (2018), 1628-1642.
\bibitem{3} M. Herzog, P. Longobardi, M. Maj, {\it Two new criteria for solvability of finite groups in finite groups}, J. Algebra {\bf 511} (2018), 215-226.
\bibitem{4} I.M. Isaacs, {\it Finite group theory}, Amer. Math. Soc., Providence, R.I., 2008.
\end{thebibliography}
\end{document}